\newtheorem{theorem}{Theorem}
\newtheorem{corollary}{Corollary}
\newenvironment{proof}[1][Proof]{\begin{trivlist}
\item[\hskip \labelsep {\bfseries #1}]}{\end{trivlist}}
\newcommand{\qed}{\nobreak \ifvmode \relax \else
      \ifdim\lastskip<1.5em \hskip-\lastskip
      \hskip1.5em plus0em minus0.5em \fi \nobreak
      \vrule height0.30em width0.4em depth0.25em\fi}
 \author{Safari Mukeru\\
\footnotesize{\em Department of Decision Sciences}\\
\footnotesize{University of South Africa, P. O. Box 392, Pretoria, 0003. South Africa}\\
\footnotesize{e-mail:mukers@unisa.ac.za}}
\title{{A generalisation of Pisier homogeneous Banach algebra}}
\date{}
\begin{document}

\maketitle

\pagenumbering{arabic}

\begin{abstract}
In 1979 Pisier proved remarkably that a sequence of independent and identically distributed standard Gaussian random variables determines, via random Fourier series, a homogeneous Banach algebra $\mathscr{P}$ strictly contained in $C(\mathbb{T})$, the class of continuous functions on the unit circle $\mathbb{T}$ and strictly containing the classical Wiener algebra $\mathbb{A}(\mathbb{T})$, that is, $\mathbb{A}(\mathbb{T}) \subsetneqq \mathscr{P} \subsetneqq C(\mathbb{T}).$ This improved some previous results obtained by Zafran in solving a long-standing problem raised by Katznelson. In this paper we extend Pisier's result by showing that any probability measure on the unit circle defines a homogeneous Banach algebra contained in $C(\mathbb{T})$. Thus Pisier algebra is not an isolated object but rather an element in a large class of Pisier-type algebras. We consider the case of spectral measures of stationary sequences of Gaussian random variables and obtain a sufficient condition for the boundedness of the random Fourier series $\sum_{n\in \mathbb{Z}}\hat f(n) \,\xi_n \exp(2\pi i n t)$ in the general setting of dependent random variables $(\xi_n)$.

\end{abstract}
{\bf Key words:} homogeneous Banach algebra, random Fourier series, Gaussian processes.\\  
{\bf AMS 2010 Classification}: 46J10, 42A20, 60B15, 60G10.

\section{Introduction}
Pisier \cite{Pisier} constructed a homogeneous Banach algebra $\mathscr{P}$ of continuous functions on the unit circle $\mathbb{T} = \mathbb{R}/\mathbb{Z}$ such that $\mathbb{A}(\mathbb{T}) \subsetneqq \mathscr{P} \subsetneqq C(\mathbb{T})$ where $C(\mathbb{T})$ is the Banach algebra of continuous functions on $\mathbb{T}$ and $\mathbb{A}(\mathbb{T})$ is the classical Winer algebra of continuous functions on $\mathbb{T}$ with absolutely convergent Fourier series. He further obtained that all 1-Lipschitzian functions operate on $\mathscr{P}$. To fully describe $\mathscr{P}$, we shall consider a fixed probability space $(\Omega, \mathscr{F}, \mathbb{P})$ and  a Gaussian Hilbert space $\mathscr{H}$, that is, a closed subspace of $L^2(\mathbb{P})$ whose elements are all Gaussian random variables.
Pisier algebra $\mathscr{P}$ is determined by a sequence $Z = (Z_n)_{n \in \mathbb{Z}}$ of independent standard Gaussian random variables. We recall that a Gaussian process on $\mathbb{T}$ is simply a family 
$\{X(t): t \in \mathbb{T}\}$ of random variables in $\mathscr{H}$. 
For $f\in C(\mathbb{T})$ and $t \in \mathbb{T}$, the random Fourier  series  $\sum_{n \in \mathbb{Z}} \hat f(n) Z_n e^{2 \pi n i t}$ converges obviously in $L^2(\mathbb{P})$ because $\sum_{n\in \mathbb{Z}} |\hat f(n)|^2 < \infty.$
The class $\mathscr{P}$ is defined as the class of functions $f \in C(\mathbb{T})$ such that the Gaussian process $S(f, Z) = \{S(f, Z, t): t\in \mathbb{T}\}$ consisting of the $L^2$-sums $S(f, Z, t) = \sum_{n \in \mathbb{Z}} \hat f(n) Z_n e^{2 \pi n i t}$ has bounded sample paths almost surely.  Equivalently $\mathscr{P}$ is the class of functions $f \in C(\mathbb{T})$ such that the Fourier random series $\sum_{n \in \mathbb{Z}} \hat f(n) Z_n e^{2 \pi n i t}$ converges in $C(\mathbb{T})$. Other equivalent characterisations of $\mathscr{P}$ can be obtained from the classical Billard theorem (see Cohen and Cuny \cite{Cohen_Cuny} and Kahane \cite[p 58]{Kahane_1985}). 

Given a Borel probability measure $\mu$ on $\mathbb{T}$ and a continuous function $f \in C(\mathbb{T})$, we shall associate a Gaussian process $X_f = \{X_f(t): t \in \mathbb{T}\}$ such that for all $t,s\in \mathbb{T}$,
     $$\mathbb{E}(|X_f(t) - X_f(s)|^2) = \int_{\mathbb{T}}|f(t+u) - f(s+u)|^2 d\mu(u)$$  
and consider the class $\mathscr{P}(\mu)$ of those continuous functions $f$ on $\mathbb{T}$ for which the process $X_f$ has bounded paths (almost surely) and set
$$     \| f\|_{\mathscr{P}(\mu)} = \mathbb{E}\left(\sup_{s,t,\in \mathbb{T}} |X_f(t) - X_f(s)|\right) + \|f\|_\infty.$$
Pisier algebra $\mathscr{P}$  is $\mathscr{P}(\mu)$ where $\mu$ is the Lebesgue measure on $\mathbb{T}$.
We shall obtain that $\mathscr{P}(\mu)$ is a homogeneous Banach algebra on which all 1-Lipschitz functions operate and further we shall obtain a sufficient condition for $\mathscr{P}(\mu) \subset \mathscr{P}$. 

As already indicated, Pisier algebra arose from the study of random Fourier series of i.i.d Gaussian random variables.  
We shall consider throughout a discrete-time complex Gaussian process $\xi = (\xi_n)_{n\in \mathbb{Z}}$  in the Gaussian Hilbert space $\mathscr{H}$ such that $\mathbb{E}(\xi_n) = 0$ and  $\mathbb{E}(|\xi_n|^2) = 1$ for all $n \in \mathbb{Z}$. We shall further assume that for all $n,m\in \mathbb{Z}$, $\mathbb{E}(\xi_n \overline{\xi_{m}})$ depends only on $n - m$ and $\mathbb{E}(\xi_n \xi_m) = 0$. This implies that the process $\xi =(\xi_n)_{n \in \mathbb{Z}}$ is stationary. Clearly for any $Z \in \mbox{span}(\xi)$, $Z$ is a complex Gaussian variable with distribution $f(z) =\exp(-|z|^2/\sigma^2)/(\pi \sigma^{2})$ where $\sigma^2 = \mathbb{E}(|Z|^2)$. 
The covariance function of $\xi$ is the function $\gamma: \mathbb{Z} \to \mathbb{C}$ defined by $\gamma(n) = \mathbb{E}(\xi_0 \overline{\xi_{n}}).$ Clearly $\gamma(-n) = \overline{\gamma(n)}$ and $\gamma(0) = 1$. 
Such sequence $(\xi_n)$ can be obtained by taking a sequence $(\zeta_n)$ of {\it real} standard Gaussian random variables such that $\mathbb{E}(\zeta_n \zeta_m)$ depends only on $(n-m)$ and thereafter take
   $\xi_n = (\zeta_n + i \zeta_n')/\sqrt{2}$ where $(\zeta_n')$ is an independent copy of $(\zeta_n)$.  
Since the covariance function $\gamma$ is positive semidefinite, the classical Bochner theorem implies that there exists a Borel probability measure $\mu$ on the unit circle $\mathbb{T}$ such that 
      \begin{eqnarray} \label{sde34rewd2we3}
\mathbb{E}(\xi_n \overline{\xi_{m}}) = \gamma(n-m) = \int_{\mathbb{T}} e^{2\pi i(n-m) t} d\mu(t),\,\,n,m\in \mathbb{Z}.
\end{eqnarray}
The measure $\mu$ is the spectral measure of the process $\xi$. If  $\mu$ is absolutely continuous with respect to the Lebesgue measure on $\mathbb{T}$, its density $\varphi$  is called the spectral density function of the process $\xi$.  In the corresponding Banach algebra $\mathscr{P}(\mu)$ (where $\mu$ is the spectral measure of $\xi$), it is easy to see that the {\it formal} random Fourier series  $\sum_{n \in \mathbb{Z}} \hat f(n) \xi_n e^{2 \pi i n t}$ is almost surely the {\it formal} Fourier series of the function $t \to X_f(t)$. This is true because the Gaussian sequence $(\hat f(n) \xi_n)_{n \in \mathbb{Z}}$ has the same distribution with the sequence of the Fourier coefficients $(\hat{X_f}(n))_{n\in \mathbb{Z}}$ of $X_f$ given by
   $$\hat{X_f}(n) = \int_{\mathbb{T}} X_f(t) e^{-2\pi i n t} dt.$$ 
If  $\mu$ has a  density $\varphi$ such that  $\varphi \in L^p(du)$ for some $1< p < \infty$, then for any $f \in \mathscr{P}(\mu)$ and $t\in \mathbb{T}$, the formal Fourier random series $\sum_{n \in \mathbb{Z}} \hat f(n) \xi_n e^{2\pi i n t}$ converges in $L^2(\mathbb{P})$. We shall obtain a sufficient condition on the covariance matrix of the sequence $(\xi_n)$ for the almost surely boundedness of the random Fourier series  $\sum_{n\in \mathbb{Z}} \hat f(n) \xi_n e^{2\pi n i t}$ that generalises a classical condition only known for i.i.d variables. This is  in particular applied to the classical fractional Gaussian noise. 

The study of random Fourier series $\sum_{n\in \mathbb{Z}} \hat f(n) \xi_n \exp(2\pi i n t)$ where the random variables $(\xi_n)$ are interdependent is important as it builds a bridge between the classical random series 
$\sum_{n\in \mathbb{Z}} \hat f(n) Z_n \exp(2\pi i n t)$ of independent variables $(Z_n)$ and the deterministic Fourier series $\sum_{n\in \mathbb{Z}} \hat f(n) \exp(2\pi i n t)$ which is the limit case where all the random variables $(\xi_n)$ are the same. For this limit case the problem of $L^2$-convergence becomes the problem of characterisation of the classical class $\mathscr{C}$ of functions $f \in C(\mathbb{T})$ with everywhere convergent Fourier series. (This is very complex since $\mathscr{C}$ is beyond the Borel hierarchy in $C(\mathbb{T})$ (see Ajtai and Kechris \cite{Ajtai_Kechris}).) 
This paper can be seen as a step toward the study of random Fourier series in the Banach space $C(\mathbb{T})$ in the presence of interdependent random variables which constitutes an important open problem.

\section{Banach algebras defined by probability measures}

Given a Borel probability measure $\mu$ on $\mathbb{T}$ and a $f \in C(\mathbb{T})$, we can associate to $f$  the pseudo-distance $d_f$ on $\mathbb{T}$ defined by
     $$d_f(t,s) = \left(\int_{\mathbb{T}} |f(t+u) - f(s+u)|^2 d\mu(u)\right)^{1/2},\,\,t,s \in \mathbb{T}.$$
Since the space $L^2(\mu, \mathbb{C})$ embeds in a Gaussian Hilbert space, there exists a Gaussian process $X_f = \{X_f(t): t \in \mathbb{T}\}$ such that for all $t,s\in \mathbb{T}$,
$$\mathbb{E}\left(X_f(t) \overline{X_f(s)}\right) = \int_{\mathbb{T}} f(t+u) \overline{f(s+u)} d\mu(u)$$ and hence
     $$\mathbb{E}(|X_f(t) - X_f(s)|^2) = d_f^2(t,s)\,\, \, \mbox{ and } \mathbb{E}(|X_f(t)|^2) = \int_{\mathbb{T}} |f(t+u)|^2 d\mu(u).$$  
More precisely, one may consider an orthonormal basis $(e_n)_{n \in A}$ of $L^2(\mu, \mathbb{C})$ (where $A$ is a subset of $\mathbb{Z}$), a sequence of independent and identically distributed (i.i.d) standard complex Gaussian random variables $(Z_n)$ and set for all $t \in \mathbb{T}$,
 $$X_f(t) = \sum_{n \in A} \langle f_t, e_n\rangle Z_n$$ where $f_t$ is the function defined on $\mathbb{T}$ by $f_t(u) = f(t+u)$ and 
 $$\langle g, e_n\rangle = \int_{\mathbb{T}} g(u) \overline{e_n(u)} d\mu(u),\,\,g \in L^2(\mu, \mathbb{C}).$$
(The convergence of the series $\sum_{n \in A} \langle f_t, e_n\rangle Z_n$ in $L^2(\mathbb{P})$ is guaranteed by the fact that $(Z_n)$ are independent $N(0,1)$ variables.) 
Clearly the process $X_f$ depends linearly on $f$. Let $\mathscr{P}(\mu)$ be the class of continuous functions $f$ on $\mathbb{T}$ such that the process $X_f$ has almost surely bounded paths and set
$$     \| f\|_{\mathscr{P}(\mu)} = \mathbb{E}\left(\sup_{s,t,\in \mathbb{T}} |X_f(t) - X_f(s)|\right) + \|f\|_\infty.$$
The classical Pisier algebra $\mathscr{P}$  is $\mathscr{P}(\mu)$ where $\mu$ is the Lebesgue measure on $\mathbb{T}$. We have the following result.
\begin{theorem}\label{th0101}
For every Borel probability measure $\mu$, the class $\mathscr{P}(\mu)$ is a homogeneous Banach algebra on which all 1-Lipschitz functions operate. 
\end{theorem}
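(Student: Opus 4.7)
The plan is to verify the four required properties in turn: (i) that $\|\cdot\|_{\mathscr{P}(\mu)}$ makes $\mathscr{P}(\mu)$ a Banach space, (ii) that it is closed under products (submultiplicativity), (iii) that it is translation invariant with continuous translation (the two halves of homogeneity), and (iv) that every 1-Lipschitz function operates. The workhorse throughout will be the Sudakov--Fernique comparison inequality for centered Gaussian processes, applied to $X_f$ via the associated pseudo-distance $d_f$, together with the linearity $X_{\alpha f + \beta g} = \alpha X_f + \beta X_g$ built into the construction.

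The norm axioms are immediate from linearity and the triangle inequality in $L^2(\mu)$. For completeness, a $\|\cdot\|_{\mathscr{P}(\mu)}$-Cauchy sequence $(f_n)$ is a fortiori $\|\cdot\|_\infty$-Cauchy, so $f_n \to f$ uniformly for some $f \in C(\mathbb{T})$. I would define $X_f(t)$ as the $L^2(\mathbb{P})$-limit of $X_{f_n}(t)$ (well posed since $\|X_{f_n}(t) - X_{f_m}(t)\|_{L^2(\mathbb{P})} \le \|f_n - f_m\|_\infty$) and apply Fatou's lemma to $\sup_{s,t}|X_{f_n - f_m}(t) - X_{f_n - f_m}(s)|$ to obtain $f \in \mathscr{P}(\mu)$ and $\|f_n - f\|_{\mathscr{P}(\mu)} \to 0$.

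For the algebra property, the pointwise identity
$$f(t+u)g(t+u) - f(s+u)g(s+u) = f(t+u)\bigl[g(t+u) - g(s+u)\bigr] + g(s+u)\bigl[f(t+u) - f(s+u)\bigr],$$
combined with Minkowski's inequality in $L^2(\mu)$, gives $d_{fg}(t,s) \le \|f\|_\infty\, d_g(t,s) + \|g\|_\infty\, d_f(t,s)$. Introducing an independent copy $X'_g$ of $X_g$ and setting $Y(t) = \sqrt{2}\,\|f\|_\infty X'_g(t) + \sqrt{2}\,\|g\|_\infty X_f(t)$, one has $\mathbb{E}|Y(t) - Y(s)|^2 \ge d_{fg}(t,s)^2$, so Sudakov--Fernique yields $\mathbb{E}\sup|X_{fg}(t) - X_{fg}(s)| \le C\bigl(\|f\|_\infty S(g) + \|g\|_\infty S(f)\bigr)$ with $S(h) := \mathbb{E}\sup|X_h(t) - X_h(s)|$ and a universal constant $C$. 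Together with $\|fg\|_\infty \le \|f\|_\infty \|g\|_\infty$ this gives submultiplicativity up to an absolute constant, absorbed by rescaling the norm. The 1-Lipschitz operation follows from the same principle applied to the pointwise inequality $d_{\varphi \circ f} \le d_f$: Sudakov--Fernique immediately gives $S(\varphi \circ f) \le S(f)$, while $\|\varphi \circ f\|_\infty \le |\varphi(0)| + \|f\|_\infty$.

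Finally, translation invariance is immediate: the centered Gaussian processes $t \mapsto X_{\tau_a f}(t)$ and $t \mapsto X_f(t+a)$ have identical covariance functions, hence identical law as stochastic processes, so $\|\tau_a f\|_{\mathscr{P}(\mu)} = \|f\|_{\mathscr{P}(\mu)}$. I expect the main obstacle to be the \emph{continuity} of translation in $\mathscr{P}(\mu)$-norm. The $\|\cdot\|_\infty$ part vanishes as $a \to 0$ by uniform continuity of $f$, but showing $\mathbb{E}\sup|X_{\tau_a f - f}(t) - X_{\tau_a f - f}(s)| \to 0$ requires more work: one first verifies this by direct calculation for trigonometric polynomials (where the process is a finite sum of Gaussians and the bound is elementary), then approximates a general $f \in \mathscr{P}(\mu)$ in $\mathscr{P}(\mu)$-norm by such polynomials and passes to the limit using the translation-isometry property already established. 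This density/approximation step is the most delicate point of the argument.
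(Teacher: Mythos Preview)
Your overall strategy matches the paper's: the proof there is little more than a statement of the (real) Sudakov--Fernique comparison principle, its extension to complex Gaussian processes with an absolute constant, and then a pointer to Pisier's original argument. Your items (i), (ii) and the 1-Lipschitz part of (iv) are exactly the details that this pointer is meant to cover, and they are correct as written (bearing in mind that the comparison inequality for complex processes carries a universal constant, so ``$S(\varphi\circ f)\le S(f)$'' should really read ``$\le C\,S(f)$'').

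The one place that needs repair is your plan for continuity of translation. You propose to prove $\|\tau_a f-f\|_{\mathscr{P}(\mu)}\to0$ first for trigonometric polynomials and then pass to general $f$ by approximating in $\mathscr{P}(\mu)$-norm. But density of trigonometric polynomials in $\mathscr{P}(\mu)$ is not known at this stage; the standard route to it (Fej\'er means, i.e.\ convolution with an approximate identity) already \emph{uses} continuity of translation, so the argument is circular. The fix is direct and avoids any approximation: from the explicit construction one has the \emph{pointwise} identity $X_{\tau_a f}(t)=X_f(t+a)$ (not merely equality in law), hence
\[
X_{\tau_a f-f}(t)-X_{\tau_a f-f}(s)=\bigl[X_f(t+a)-X_f(t)\bigr]-\bigl[X_f(s+a)-X_f(s)\bigr].
\]
Since $f\in\mathscr{P}(\mu)$, the paths of $X_f$ are a.s.\ bounded, hence (Gaussian) a.s.\ uniformly continuous on $\mathbb{T}$, so the supremum over $s,t$ tends to $0$ a.s.; it is dominated by $2\sup_{s,t}|X_f(t)-X_f(s)|$, which has finite expectation, and dominated convergence gives $S(\tau_a f-f)\to0$. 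With this adjustment your proof is complete and in line with Pisier's argument that the paper invokes.
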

\begin{proof}
The proof is a replication of Pisier's original argument in \cite{Pisier} and the following classical comparison principle: 
Let $X = \{X(t): t\in \mathbb{T}\}$ and $Y = \{Y(t): t\in \mathbb{T}\}$ be two real Gaussian processes. If  $$\mathbb{E}|Y(t) - Y(s)|^2 \leq \mathbb{E}|X(t) - X(s)|^2\,\,\,\,\mbox{ for all } t, s \in \mathbb{T},$$  
then
       $$\mathbb{E}\left(\sup_{s, t \in \mathbb{T}}|Y(t) - Y(s)|\right) \leq \mathbb{E}\left(\sup_{s, t \in \mathbb{T}}|X(t) -X(s)|\right).$$ In particular if $X$ has bounded paths almost surely then $Y$ has also bounded paths almost surely. 
Here $$\mathbb{E}\left(\sup\{|X(t)|: t \in \mathbb{T}\}\right) = \sup\left(\left\{\mathbb{E}\sup\{|X(t)|: t \in F\}: F \mbox{ finite subset of } \mathbb{T}\right\}\right). $$
The comparison principle extends easily to complex Gaussian processes. Indeed, write $X_1 = \Re(X)$, $X_2 = \Im(X)$ (both $X_1$ and $X_2$ are  real Gaussian processes) and set $X' = X_1 + X_2'$ where $X_2'$ is a real Gaussian process which is an independent copy of $X_2$. 
Then clearly 
$$\mathbb{E}\left(\sup_{t,s}|X(t) - X(s)|\right) \leq 2\mathbb{E}\left(\sup_{t,s}|X'(t) - X'(s)|\right),$$ $$\mathbb{E}\left(\sup_{t,s} |X'(t) - X'(s)|\right) \leq 2 \mathbb{E}\left(\sup_{t,s}|X(t) - X(s)|\right)$$ and
 $$\mathbb{E}|X'(t) - X'(s)|^2  = \mathbb{E}|X_1(t) - X_1(s)|^2 + \mathbb{E}|X_2'(t) - X_2'(s)|^2 = \mathbb{E}|X(t) - X(s)|^2.$$ 
 Similarly for $Y$, define $Y_1, Y_2, Y_2'$ and  write $Y' = Y_1 + Y_2'$ so that  
 $\mathbb{E}|Y'(t) - Y'(s)|^2  =  \mathbb{E}|Y(t) - Y(s)|^2$. Hence $\mathbb{E}|Y'(t) - Y'(s)|^2 \leq \mathbb{E}|X'(t) - X'(s)|^2$ which implies 
 $$\mathbb{E}\left(\sup_{t,s}|Y'(t) - Y'(s)|\right) \leq \mathbb{E}\left(\sup_{t,s}|X'(t) - X'(s)|\right).$$
Then 
% Clearly 
\begin{eqnarray*}
\mathbb{E}\left(\sup_{t,s}|Y(t) - Y(s)|\right) &\leq &  2 \mathbb{E}\left(\sup_{t,s}|Y'(t) - Y'(s)|\right) \leq 
 2 \mathbb{E}\left(\sup_{t,s}|X'(t) - X'(s)|\right)\\
& \leq & 4 \mathbb{E}\left(\sup_{t,s}|X(t) - X(s)|\right).
\end{eqnarray*}
Using the comparison principle, it follows that for $f, g\in C(\mathbb{T})$, if 
  $$ \mathbb{E}(|X_f(t) - X_f(s))|^2 \leq \mathbb{E}(|X_g(t) - X_g(s)|^2, $$ 
then 
       $$\mathbb{E}\left(\sup_{s,t}|X_f(t) - X_f(s))|\right) \leq 4 \mathbb{E}\left(\sup_{s,t}|X_g(t) - X_g(s))|\right).$$ This implies that  if $g \in \mathscr{P}(\xi)$ then $f \in \mathscr{P}(\xi)$.      
It is now easy to use the same argument as in the paper by Pisier \cite{Pisier} to obtain the complete proof of Theorem \ref{th0101}. \hfill \qed
     
\end{proof}

In the case where the probability measure $\mu$ is absolutely continuous with density $\varphi$, that is,
$d\mu(u) = \varphi(u) du$, it is clear that if $\varphi$ is bounded, then  
$\mathscr{P}  \subset \mathscr{P}(\mu)$ where $\mathscr{P}$ is the original Pisier algebra (associated to the Lebesgue measure on $\mathbb{T}$). 

\begin{theorem} \label{referee_c1}
Let $\mu$ be a Borel probability measure on $\mathbb{T}$. If there is an interval $I \subset \mathbb{T}$ and a constant $C>0$ such that for any continuous function $g \geq 0$ on $\mathbb{T}$
 $$ \int_I g(u) du \leq C \int_I g(u) d\mu(u),$$ 
 then $\mathscr{P}(\mu) \subset \mathscr{P}.$ 
In particular if $\mu$ admits a density $\varphi$ that is continuous and strictly positive on some open interval $I$ of $\mathbb{T}$, then $\mathscr{P}(\mu) \subset \mathscr{P}.$      
\end{theorem}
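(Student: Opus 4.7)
The plan is to exhibit, for each $f\in C(\mathbb{T})$, an auxiliary Gaussian process whose increments dominate those of the Pisier (Lebesgue) process $Y_f$ and whose expected oscillation is controlled by $\|f\|_{\mathscr{P}(\mu)}$; the complex comparison principle established inside the proof of Theorem~\ref{th0101} then transfers almost sure boundedness from $X_f$ to $Y_f$, yielding $\mathscr{P}(\mu)\subset\mathscr{P}$.

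First I would bridge the gap between the local hypothesis (which only provides a comparison of integrals on the single interval $I$) and the global pseudo-distance associated to Lebesgue measure. By compactness of $\mathbb{T}$ and positivity of $|I|$, there exist finitely many points $a_1,\dots,a_N\in\mathbb{T}$ such that the translates $-a_k+I$ cover $\mathbb{T}$. Splitting
$$\int_{\mathbb{T}}|f(t+u)-f(s+u)|^2\,du\le\sum_{k=1}^{N}\int_{-a_k+I}|f(t+u)-f(s+u)|^2\,du,$$
performing the change of variables $v=u+a_k$ on each piece, and then applying the hypothesis to the continuous non-negative function $g_k(v)=|f((t-a_k)+v)-f((s-a_k)+v)|^2$, I obtain
$$\int_{\mathbb{T}}|f(t+u)-f(s+u)|^2\,du\le C\sum_{k=1}^{N}d_f^2(t-a_k,\,s-a_k).$$

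Next I would realise the right-hand side as the pseudo-distance of a genuine Gaussian process. On an enlarged probability space, let $X_f^{(1)},\dots,X_f^{(N)}$ be independent copies of $X_f$ and set $X'(t)=\sqrt{C}\,\sum_{k=1}^{N}X_f^{(k)}(t-a_k)$. By independence,
$$\mathbb{E}|X'(t)-X'(s)|^2=C\sum_{k=1}^{N}d_f^2(t-a_k,s-a_k)\ge\mathbb{E}|Y_f(t)-Y_f(s)|^2,$$
so the complex comparison principle yields
$$\mathbb{E}\sup_{t,s}|Y_f(t)-Y_f(s)|\le 4\,\mathbb{E}\sup_{t,s}|X'(t)-X'(s)|\le 4\sqrt{C}\,N\,\mathbb{E}\sup_{t,s}|X_f(t)-X_f(s)|,$$
the last inequality using the triangle inequality together with the translation invariance of $\sup_{t,s\in\mathbb{T}}$. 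Hence $f\in\mathscr{P}(\mu)$ forces $Y_f$ to have almost surely bounded paths, i.e.\ $f\in\mathscr{P}$, and the embedding is continuous with constant depending only on $C$ and $N$.

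For the ``in particular'' clause, if $\varphi$ is continuous and strictly positive on the open interval $I$, I would pick a closed subinterval $I'\subset I$ on which $m:=\min_{u\in I'}\varphi(u)>0$; then $\int_{I'}g(u)\,du\le m^{-1}\int_{I'}g(u)\,d\mu(u)$ for every continuous $g\ge 0$, reducing this case to the first part with $I'$ and $C=m^{-1}$. The main obstacle in the whole argument is conceptual rather than computational: the hypothesis compares $\mu$ with Lebesgue measure only on $I$, whereas the Pisier pseudo-distance integrates over all of $\mathbb{T}$, and the translation-plus-independent-copies device is precisely what upgrades the local comparison of measures to a global comparison of Gaussian pseudo-distances without losing uniformity.
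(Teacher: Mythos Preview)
Your proof is correct and follows essentially the same approach as the paper's: cover $\mathbb{T}$ by finitely many translates of $I$, transfer the local Lebesgue-versus-$\mu$ comparison to each piece, build a dominating Gaussian process from independent translated copies of $X_f$, and invoke the comparison principle. The paper merely repackages these same steps by introducing the auxiliary measure $\mu'=N^{-1}\sum_{j}\delta_{t_j}*\mu$ and arguing in two stages (first $\mathscr{P}(\mu')\subset\mathscr{P}$ via the global inequality $\int_{\mathbb{T}} g\,du\le CN\int_{\mathbb{T}} g\,d\mu'$, then $\mathscr{P}(\mu)\subset\mathscr{P}(\mu')$ via the independent-copies device), but the underlying mechanism is identical to yours.
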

\begin{proof}
Clearly there is a finite set of translations of $I$ that covers $\mathbb{T}$, that is, there exist $t_1, t_2, \ldots, t_N$ in $\mathbb{T}$ such that
$\mathbb{T} \subset (I + t_1) \cup \ldots \cup (I + t_N).$ Set $\mu' = (\sum_{k=1}^N \mu_j)/N$ with $\mu_j = \delta_{t_j} * \mu$. (Since  each $\mu_j$ is a probability measure then  $\mu'$ is  also a  probability measure.) 
 Then for any continuous function $g \geq 0$ on $\mathbb{T}$,
\begin{eqnarray*}
 \int_{\mathbb{T}} g(u) du  & \leq & \sum_{j=1}^N \int_{I + t_j} g(u) du = \sum_{j=1}^N \int_{I} g(u+t_j) du  \leq C \sum_{j=1}^N \int_{I} g(u+t_j) d\mu(u) \\
&  = &  C \sum_{j=1}^N \int_{I} g(u) d\mu_j(u) =  C N \int_{I} g(u) d\mu'(u)  \\
&\leq & C N \int_{\mathbb{T}} g(u) d\mu'(u).
\end{eqnarray*}
In particular
 for any $t,s \in \mathbb{T}$, $f\in C(\mathbb{T})$,
  $$\int_{\mathbb{T}}|f(t+u) - f(s+u)|^2 du \leq C N \int_{\mathbb{T}}|f(t+u) - f(s+u)|^2 d\mu'(u).$$ This implies by the comparison principle that $\mathscr{P}(\mu') \subset \mathscr{P}$. Moreover $\mathscr{P}(\mu') = \mathscr{P}(\mu)$. Indeed, for any $t_0 \in \mathbb{T}$, it is clear that $\mathscr{P}(\mu) = \mathscr{P}(\delta_{t_0} * \mu)$ since the corresponding processes $X_f$ are obtained from the other by a time translation by $t_0$. This implies that $\mathscr{P}(\mu') \subset \mathscr{P}(\mu)$. To see that $\mathscr{P}(\mu) \subset \mathscr{P}(\mu')$ also holds, consider independent processes $X_f^1, X_f^2, \ldots, X_f^N$ such that $X_f^j$ is the process associated to the measure $\delta_{t_j} * \mu$ for all $1 \leq j \leq N.$ Set $X'_f = (X_f^1 + \ldots+ X_f^N)/N$. Clearly $X'_f$ can be viewed as the process associated to the measure $\mu'$ and 
$$\mathbb{E}(\sup_{s,t}|X'_f(t) - X'_f(s)|) \leq \frac{1}{N}\sum_{j=1}^N \mathbb{E}(\sup_{s,t}|X^j_f(t) - X^j_f(s)|) = \mathbb{E}(\sup_{s,t}|X_f(t) - X_f(s)|)$$ since each $X^j_f$ has the same distribution as $X_f$ (for all $j$). The comparison principle yields  $\mathscr{P}(\mu) \subset \mathscr{P}(\mu')$. \hfill \qed 
\end{proof}

\paragraph{Remark 1:} If  $\mu$ is the spectral measure of a stationary complex Gaussian process $(\xi_n)$ and $\mu$ has a  density $\varphi$ such that  $\varphi \in L^p(du)$ for some $1 < p < \infty$, then for any $f \in \mathscr{P}(\mu)$ and $t\in \mathbb{T}$, the formal Fourier random series $\sum_{n \in \mathbb{Z}} \hat f(n) \xi_n e^{i n t}$ converges in $L^2(\mathbb{P})$. Indeed, for $t \in \mathbb{T}$ let $U: \mbox{span}(e^{2\pi n t}) \to \mbox{span}(\xi_n)$ be the linear map such that $U(e^{2\pi n i t}) = \xi_n$ for all $n\in \mathbb{Z}$. 
By relation (\ref{sde34rewd2we3}) $U$ defines an isometry from $L^2(\mu)$ to $L^2(\mathbb{P})$. Then by H\"older's inequality $L^r(du) \subset L^2(\mu)$ for some $1 < r < \infty$ and hence $U$ is bounded from $L^r(du)$ to $L^2(\mathbb{P})$. Since the Fourier transform of any $f \in L^r(du)$ converges in $L^r(du)$ it follows that for any $f$ in $L^r(du)$ the series $\sum_{n\in \mathbb{Z}} \hat f(n) \xi_n$ converges in $L^2(\mathbb{P})$. Applying this to the translates $f_t(u) = f(t+u)$ of $f$ yields that the random Fourier series $\sum_{n\in \mathbb{Z}} \hat f(n) \xi_n e^{2\pi n i t}$ converges in $L^2(\mathbb{P})$ for every $t\in \mathbb{T}$.

\section{Banach algebras spanned by the fractional Gaussian noise}
\subsection{Spectral function density of fractional Gaussian noise}
Now we provide explicit examples of Pisier-type Banach algebras $\mathscr{P}(\mu)$. 
The classical discrete complex fractional Gaussian noise (fGn) of parameter $0 \leq H < 1$ is the complex  Gaussian process $\Delta = (\Delta_n)_{n\in \mathbb{Z}}$ with mean $0$ and covariance function:
      \begin{eqnarray} \label{ew32wswaws2}
\mathbb{E}(\Delta_n \overline{\Delta_m}) = \gamma(n-m) =  {\scriptstyle\frac{1}{2}}|n-m+1|^{2H} + {\scriptstyle\frac{1}{2}}|n-m-1|^{2H} -|n-m|^{2H},\,\,\,m,n \in \mathbb{Z}. 
\end{eqnarray}
The process $\Delta$ can be realised as $\Delta_n = (\delta^1_n + i \delta^2_n)/\sqrt{2}$ where $\delta^1 = (\delta^1_n)$  is the classical real fractional Gaussian noise and $\delta^2$ is an independent copy of $\delta^1$. (Here $\mathbb{E}(\delta^1_n \delta^1_m) = \gamma(n-m)$.)
The process $\Delta$ reduces to a sequence of i.i.d complex Gaussian random variables for $H = 1/2$ but generally for $H \ne 1/2$ the random variables $(\Delta_n)$ are {\it interdependent}. (Fractional Gaussian noise is an important model with wide range applications in physics and signal processing.) It is well-known that the sequence $\Delta$ has a spectral density function and we shall denote it by $\varphi_H$. The covariance function (\ref{ew32wswaws2}) implies that for any $\ell \in \mathbb{Z}$, the sequence 
$(\Delta_{\ell+k})_{k \in \mathbb{Z}}$ has the same distribution as the initial sequence $\Delta = (\Delta_{k})_{k \in \mathbb{Z}}$, that is the distribution of $\Delta$ is stationary. 
Moreover for any $\ell \in \mathbb{Z}$, the sequence $\Delta^{\ell} = (\Delta^{\ell}_k)_{k \in \mathbb{Z}}$ defined by
          \begin{eqnarray*}
\Delta^{\ell}_k = \frac{1}{\ell^{H}}\left(\sum_{ k \ell \leq j <(k+1)\ell} \Delta_j\right),\,\, \mbox{ for all } k \in \mathbb{Z}
\end{eqnarray*}
has the same distribution as the initial sequence $\Delta =  (\Delta_{k})_{k \in \mathbb{Z}}$. This means that the distribution of $\Delta$ is self-similar. (See Sinai \cite{Sinai} for some details on self-similar distributions.)  This can be seen by noting that the fGn $(\Delta_k)$ is a sequence of the increments $(X(k+1) - X(k))$ of the fractional Brownian motion of index $H$ and it is well-known that it is a self-similar process in the sense that, for any $\alpha>0$,  
 the processes $\{X(\alpha t): t\in \mathbb{R}\}$ has the same distribution as the process $\{\alpha^{H} X(t): t\in \mathbb{R}\}$.  Indeed,        
$$\Delta^{\ell}_k = \frac{1}{\ell^{H}} \left(X((k+1)\ell  - X(k \ell) \right) \sim X(k+1) - X(k) = \Delta_k$$ by the self-similarity of fractional Brownian motion.  
Since the distribution of $\Delta$ is stationary and self-similar, then by a result of Sinai \cite[Theorem 2.1]{Sinai}, it admits a spectral density function $\varphi_H$ on the unit circle $\mathbb{T}$ given by
  \begin{eqnarray} \label{dsdfesw23}
\varphi_H(t) = C(H)|e^{2 \pi i t}-1|^2\left(\sum_{n=-\infty}^\infty \frac{1}{|t+n|^{2H+1}}\right),\, t \in \mathbb{T}, t \ne 0
\end{eqnarray}
where $C(H)$ is a normalising constant. 
Clearly, 
\begin{eqnarray} \label{dsdfesw2312s}
\varphi_H(t) &=& 4 C(H) \left(\sin^2 \pi t\right) \sum_{n=0}^\infty \left(\frac{1}{(n+t)^{2H+1}} + \frac{1}{(n+ 1-t)^{2H+1}}\right) \nonumber\\
           & = &4 C(H) \left(\sin^2 \pi t\right) (\zeta(2H+1, t) + \zeta(2H+1, 1-t))
\end{eqnarray}
 where $\zeta(.,.)$ is the classical Hurwitz zeta function.
For $H = 1/2$, obviously $\varphi_H = 1$ (constant function on $\mathbb{T}$). For $H \ne 1/2$, $\varphi_H$ is clearly continuous on $(0, 1)$. Moreover it is clear 
that 
  $\zeta(2H+1, t) = O(t^{-(2H+1)})$  for $t$ near $0$ and $\zeta(2H+1, 1-t) = O(1-t)^{-(2H+1)}$  for $ t$ near $1$. Since also 
      $\sin^2 \pi t = O(t^2 (1-t)^2)$ for $t$ near $0$ or $1$, it follows from (\ref{dsdfesw2312s}) that  
\begin{eqnarray} \label{sddsdwdwwsas}
\varphi_H(t)  = O(t^{1-2H} (1-t)^{1-2H}),\,\, \mbox{ for } t \mbox{ near } 0 \mbox{ or } 1.
\end{eqnarray}
This implies that for $H\leq 1/2$, the function $\varphi_H$ is  continuous on the whole unit circle $\mathbb{T}$ and for $H > 1/2$, $t = 0$ and $t = 1$ are singular points (actually $0 = 1$ in $\mathbb{T}$). However even in the case where $H > 1/2$,   $\varphi_H$ is integrable and in fact (\ref{sddsdwdwwsas}) implies that 
      \begin{eqnarray} \label{dsdsedsqw3e}
      \varphi_H \in L^{p}(\mathbb{T})\mbox{  for all } p < 1/(2H-1),\,\, H>1/2
\end{eqnarray}
         
 \subsection{The Banach algebra $\mathscr{P}(H)$}
The class $\mathscr{P}(\mu_{\Delta})$ where $\mu_\Delta$ is the spectral measure of the discrete fractional Gaussian noise $\Delta = (\Delta_n)$ of parameter $H$ will simply be denoted $\mathscr{P}(H)$. Its study was initiated in \cite{Mukeru_LMJ} 
where it was proven that for $0 \leq H \leq 1/2$, $ \mathscr{P}(\frac{1}{2}) \subset \mathscr{P}(H)$ and it was conjectured that $\mathscr{P}(H) \subset \mathscr{P}(\frac{1}{2})$ for $\frac{1}{2} \leq H < 1.$ 
The question whether $\mathscr{P}(H)$ is in general a Banach algebra with the usual multiplication of functions was also raised. With the discussion above, it is now clear that for all $0 \leq H < 1$,  $\mathscr{P}(H)$ is a homogeneous Banach algebra on which all 1-Lipschitzian functions operate.  
Moreover for $0 \leq H < 1/2$,  the continuity and boundedness of $\varphi_H$ and the fact that $\varphi_H(t) > 0$ everywhere on $\mathbb{T}$ (except for $t = 0= 1$) implies by Theorem \ref{referee_c1}  that $\mathscr{P}(H) = \mathscr{P}(\frac{1}{2}) = \mathscr{P}$. 
Finally for $H$ varying in the interval $[0, 1)$, $\mathscr{P}(H)$ is a decreasing family of homogeneous Banach algebra. That is,
\begin{theorem}
 For any $0 \leq H_1 \leq H_2 < 1$, 
            $\mathscr{P}(H_2) \subset \mathscr{P}(H_1).$                      
\end{theorem}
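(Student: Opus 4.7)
The plan is to reduce the inclusion to a pointwise domination between the spectral densities and then invoke the complex comparison principle used in the proof of Theorem~\ref{th0101}. First, I would observe that the discussion preceding this theorem establishes $\mathscr{P}(H) = \mathscr{P}$ for all $0 \leq H \leq 1/2$, so whenever $H_1 \leq 1/2$ the inclusion $\mathscr{P}(H_2) \subset \mathscr{P}(H_1) = \mathscr{P}$ reduces to the special case $H_1 = 1/2$. Hence it suffices to treat the range $1/2 \leq H_1 \leq H_2 < 1$, in which $s_i := 2H_i + 1 \in [2, 3)$.

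The main step is then to produce a constant $K = K(H_1, H_2) > 0$ such that $\varphi_{H_1}(t) \leq K \, \varphi_{H_2}(t)$ for almost every $t \in \mathbb{T}$. Granted this, for any $f \in C(\mathbb{T})$ and all $t, s \in \mathbb{T}$,
$$\mathbb{E}|X^{H_1}_f(t) - X^{H_1}_f(s)|^2 = \int_{\mathbb{T}} |f(t+u) - f(s+u)|^2 \varphi_{H_1}(u) \, du \leq K \, \mathbb{E}|X^{H_2}_f(t) - X^{H_2}_f(s)|^2,$$
so rescaling $X^{H_1}_f$ by $1/\sqrt{K}$ and applying the complex comparison principle from the proof of Theorem~\ref{th0101} yields
$$\mathbb{E}\Bigl(\sup_{s,t \in \mathbb{T}} |X^{H_1}_f(t) - X^{H_1}_f(s)|\Bigr) \leq 4 \sqrt{K} \, \mathbb{E}\Bigl(\sup_{s,t \in \mathbb{T}} |X^{H_2}_f(t) - X^{H_2}_f(s)|\Bigr),$$
so $f \in \mathscr{P}(H_2)$ implies $f \in \mathscr{P}(H_1)$.

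To construct $K$, I would work directly from the Hurwitz-zeta representation (\ref{dsdfesw2312s}). Writing $g_s(t) = \zeta(s, t) + \zeta(s, 1 - t)$, the ratio $\varphi_{H_1}/\varphi_{H_2}$ equals $(C(H_1)/C(H_2)) \, g_{s_1}/g_{s_2}$. Splitting off the nearest singular terms,
$$g_s(t) = t^{-s} + (1-t)^{-s} + h_s(t), \qquad h_s(t) = \sum_{n=1}^{\infty} \bigl[(n+t)^{-s} + (n+1-t)^{-s}\bigr],$$
the tail $h_s$ converges uniformly on $[0,1]$ (since $s > 1$ throughout the relevant range), is continuous, and hence bounded by some $M_s < \infty$. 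Because $s_1 \leq s_2$ and $t, 1-t \in (0, 1)$, one has $t^{-s_1} \leq t^{-s_2}$ and $(1-t)^{-s_1} \leq (1-t)^{-s_2}$; combined with the lower bound $g_{s_2}(t) \geq t^{-s_2} + (1-t)^{-s_2} \geq 2^{s_2+1}$ (the minimum of $u^{-s_2} + (1-u)^{-s_2}$ on $(0,1)$ being attained at $u = 1/2$), one obtains
$$\frac{g_{s_1}(t)}{g_{s_2}(t)} \leq 1 + \frac{M_{s_1}}{2^{s_2+1}} \quad \text{for all } t \in (0, 1),$$
which supplies the required $K$.

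The only delicate point is the singularity at $t = 0$ (and symmetrically at $t = 1$), where both $\varphi_{H_i}$ blow up like $t^{1 - 2H_i}$; but the explicit Hurwitz-zeta asymptotics make it transparent that the smaller exponent $H_1$ produces the milder singularity, so the ratio $\varphi_{H_1}/\varphi_{H_2}$ in fact tends to $0$ at $t = 0$ when $H_1 < H_2$. No serious obstacle is expected beyond careful bookkeeping of these elementary estimates.
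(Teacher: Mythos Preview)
Your proposal is correct and follows the same strategy as the paper: establish a pointwise bound $\varphi_{H_1}\leq K\,\varphi_{H_2}$ from the Hurwitz-zeta representation (\ref{dsdfesw2312s}) and then apply the comparison principle. The only differences are tactical: the paper bounds the ratio $\varphi_{H_1}/\varphi_{H_2}$ by a continuity/compactness argument (it is continuous on $(0,1)$ and tends to $0$ at the endpoints), whereas you first reduce to $H_1\geq 1/2$ via the already-established identity $\mathscr{P}(H)=\mathscr{P}$ for $H\leq 1/2$ and then produce an explicit bound by splitting off the singular terms $t^{-s}+(1-t)^{-s}$.
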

\begin{proof}
This follows from the structure of the spectral density function $\varphi_H$ of the fractional Gaussian noise: 
For for $H_1 \leq  H_2$, it is the case that the quotient $\varphi_{H_1}/\varphi_{H_2}$ is bounded. Indeed, 
write
 $$\frac{\varphi_{H_1}}{\varphi_{H_2}} = K\, \frac{(\zeta(1+2H_1, t) + \zeta(1+2H_1, 1-t))}{(\zeta(1+2H_2, t) + \zeta(1+2H_2, 1-t))},\,\, t \in (0, 1) $$
with $K = C(H_1)/C(H_2)$. Clearly $\varphi_{H_1}/\varphi_{H_2}$  is continuous in the open interval $(0, 1)$.
 For $t$ in a neighborhood of $0$ in $\mathbb{T}$, 
 $$\frac{\varphi_{H_1}}{\varphi_{H_2}} = O(t^{-(1+ 2H_1)}/t^{-(1+2H_2)}) = O(t^{2(H_2- H_1)}),$$ which implies that  $\lim_{t \to 0} \varphi_{H_1}(t)/\varphi_{H_2}(t)= 0.$
Similarly for $t$ near $1$, 
      $\lim_{t \to 1} \varphi_{H_1}(t)/\varphi_{H_2}(t) = 0.$ This implies that $\varphi_{H_1}/\varphi_{H_2}$ is continuous in the closed interval $[0, 1]$ and hence $\varphi_{H_1}/\varphi_{H_2}$  is continuous and bounded on the unit circle $\mathbb{T}$. Then for a constant 
  $M$ depending only on $H_1$ and $H_2$ we have that 
    \begin{eqnarray} \label{sdsdfssaw223e}
\varphi_{H_1}(t) \leq M\, \varphi_{H_2}(t) \mbox{ for all } t \in \mathbb{T}.
\end{eqnarray}
Then for each $f \in C(\mathbb{T})$, 
      $$\int_{\mathbb{T}} |f(t+u) - f(s+u)|^2 \varphi_{H_1}(u) du \leq M \int_{\mathbb{T}} |f(t+u) - f(s+u)|^2 \varphi_{H_2}(u) du$$
      which yields by the comparison principle that $\mathscr{P}(H_2) \subset \mathscr{P}(H_1)$. 
\end{proof}

\section{Boundedness of random Fourier series}

\begin{theorem}\label{MuPi}
Let $\xi = (\xi_n)_{n\in \mathbb{Z}}$ be a stationary complex Gaussian process with mean $0$, unit variance, covariance matrix $(\gamma(n-m))_{n,m \in \mathbb{Z}}$ and such that $\mathbb{E}(\xi_n \xi_m) = 0$ for all $n,m \in \mathbb{Z}$. Assume that there exists a constant $b\geq 0$ such that the operator defined in $\ell^2(\mathbb{Z})$ by the bi-infinite matrix $(a_{nm})_{n,m\in \mathbb{Z}}$ given by
 $$a_{nm} = |\gamma(n-m)| |n m|^{-b} \mbox{ for } n\ne 0,\,m\ne 0 \mbox{ and } a_{nm} = 0 \mbox{ for } n=m=0$$
is bounded. Let $f \in L^2(\mathbb{T})$. If 
\begin{eqnarray}\label{dsdsdefre}
\sum_{|n| \geq 2} \frac{\left(\sum_{|k| \geq |n|} |\hat f(k)|^2 |k|^{2 b}\right)^{1/2}}{|n| (\log |n|)^{1/2}} < \infty,
\end{eqnarray}
then the random series $t \mapsto \sum_{n\in \mathbb{Z}} \hat f(n) \xi_n e^{2\pi n i t}$ is bounded almost surely. 
\end{theorem}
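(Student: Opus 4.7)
The plan is to reduce Theorem~\ref{MuPi} to the classical case of independent Gaussian coefficients. I introduce the auxiliary i.i.d.\ Gaussian random Fourier series
$$ \tilde Y(t) = \sum_{n\neq 0} \hat f(n)|n|^{b} Z_n e^{2\pi i n t}, $$
with $(Z_n)$ an i.i.d.\ sequence of standard complex Gaussian variables, and use the operator hypothesis to show that the $L^2$-increments of $Y(t) := \sum_{n\in\mathbb{Z}}\hat f(n)\xi_n e^{2\pi int}$ are dominated by those of $\tilde Y$. The complex comparison principle from the proof of Theorem~\ref{th0101} will then transport bounded paths of $\tilde Y$ to those of $Y$, and the classical Marcus--Pisier criterion for i.i.d.\ Gaussian random Fourier series finishes the job under hypothesis (\ref{dsdsdefre}).

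For the variance comparison, I fix $t,s\in\mathbb{T}$, set $a_n := \hat f(n)(e^{2\pi int}-e^{2\pi ins})$ (so $a_0 = 0$), and work with the truncated partial sums $Y_N(t) = \sum_{|n|\leq N}\hat f(n)\xi_n e^{2\pi int}$. The hypotheses $\mathbb{E}(\xi_n\overline{\xi_m})=\gamma(n-m)$ and $\mathbb{E}(\xi_n\xi_m)=0$ give
$$ \mathbb{E}|Y_N(t)-Y_N(s)|^2 = \sum_{|n|,|m|\leq N} a_n\overline{a_m}\,\gamma(n-m) \leq \sum_{\substack{n,m\neq 0\\ |n|,|m|\leq N}} |a_n|\,|a_m|\,|\gamma(n-m)|. $$
Setting $u_n := |a_n|\,|n|^{b}$ for $n\neq 0$, the identity $|a_n||a_m||\gamma(n-m)| = u_n u_m\, a_{nm}$ together with the boundedness of $A=(a_{nm})$ on $\ell^2(\mathbb{Z})$ yields
$$ \mathbb{E}|Y_N(t)-Y_N(s)|^2 \leq \|A\|\sum_{n\neq 0}|a_n|^{2}|n|^{2b} = \|A\|\,\mathbb{E}|\tilde Y(t)-\tilde Y(s)|^{2}, $$
uniformly in $N$. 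The complex comparison principle developed in the proof of Theorem~\ref{th0101} then gives $\mathbb{E}\sup_{t,s}|Y_N(t)-Y_N(s)| \leq 4\sqrt{\|A\|}\,\mathbb{E}\sup_{t,s}|\tilde Y(t)-\tilde Y(s)|$.

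It remains to bound the right-hand side. The classical Marcus--Pisier sufficient condition asserts that an i.i.d.\ Gaussian random Fourier series $\sum_k c_k Z_k e^{2\pi ikt}$ has almost surely bounded sample paths whenever $\sum_{n\geq 2}(\sum_{|k|\geq n}|c_k|^{2})^{1/2}/(n(\log n)^{1/2}) < \infty$. Applied to $c_k := \hat f(k)|k|^{b}$ this is exactly condition (\ref{dsdsdefre}), so $\mathbb{E}\sup_{t,s}|\tilde Y(t)-\tilde Y(s)|<\infty$. Combined with the uniform-in-$N$ bound above, letting $N\to\infty$ shows that $Y$ has bounded sample paths almost surely.

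The main obstacle I foresee is the rigorous passage from partial sums to the full random series: the statement is about $\sum_{n\in\mathbb{Z}}$, so one must show convergence of $Y_N$ in a topology compatible with almost sure boundedness. This is handled by re-running the two-step argument on the tail $Y_N-Y_M$: the operator bound and Marcus--Pisier, both applied to the Fourier coefficients restricted to $\{|n|>M\}$, give $\mathbb{E}\sup_t|Y_N(t)-Y_M(t)|\to 0$ as $M,N\to\infty$, so the partial sums are Cauchy in $L^1(\Omega;L^\infty(\mathbb{T}))$ and the limit process has almost surely bounded paths. A minor secondary point is that the vanishing of $a_0$ ensures no contribution from the $n=0$ row and column of the covariance, so the operator norm of $A$ (and not a larger auxiliary matrix) is indeed enough.
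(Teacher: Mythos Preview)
Your proof is correct and follows essentially the same route as the paper: bound the $L^2$-increments of the dependent series by those of the i.i.d.\ series $\sum_n \hat f(n)|n|^{b} Z_n e^{2\pi int}$ via the operator norm of $A$, then invoke the Marcus--Pisier criterion together with the comparison principle. Your treatment of the passage from partial sums to the full series (Cauchy in $L^1(\Omega;L^\infty(\mathbb{T}))$) is in fact more careful than the paper's, which only records pointwise $L^2$-convergence before applying the comparison argument.
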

In particular if $f\in C(\mathbb{T})$ and satisfies (\ref{dsdsdefre}) then $f \in \mathscr{P}(\mu)$ where $\mu$ is the spectral measure of $\xi$.
\begin{proof}
Condition (\ref{dsdsdefre}) is a generalisation of the following condition for independent variables:
For a sequence $(g_n)$ of independent standard Gaussian random variables and for $f \in C(\mathbb{T})$ and a sequence $(a_n)_{n\geq 1}$ of real numbers, if
  $$\sum_{n=2}^\infty \frac{\left(\sum_{k=n}^\infty a_k^2\right)^{1/2}}{n (\log n)^{1/2}} < \infty,$$
  then the random series    $t \mapsto \sum_{n=1}^\infty a_n g_n e^{2\pi n i t}$, $t\in \mathbb{T}$, converges uniformly. (See Marcus and Pisier \cite[p. 122]{Marcus_Pisier_book}.) 
Consider the series $S(f,\xi,t) =  \sum_{n\in \mathbb{Z}} \hat f(n) \xi_n \exp(2\pi i n t)$, $t \in \mathbb{T}$ and write for $N \in \mathbb{N}$
     $$S_N(t) = \sum_{|n| \leq N} \hat f(n) \xi_n e_n(t),\,\,\mbox{ with } e_n(t) = \exp(2\pi i n t),\,\, \mbox{ for all } n $$
     and  $\gamma(n-m) = \mathbb{E}(\xi_n \overline{\xi_m})$.
Then for $N \leq M$ in $\mathbb{N}$, 
 \begin{eqnarray*} 
\mathbb{E}(|S_M(t) -S_{N}(t)|^2) & \leq & \sum_{N< |n| \leq M} |\hat f(n)|^2 |e_n(t)|^2 + 
\sum_{N < |n|\ne |m| \leq M} |\gamma(n-m)| |\hat f(n)| |\hat f(m)| |e_n(t)| |e_m(t)|\\
& \leq & \sum_{N< |n| \leq M} |\hat f(n)|^2 |e_n(t)|^2 + 
\sum_{N < |n|\ne |m| \leq M} a_{nm} |n|^b |m|^b |\hat f(n)| |\hat f(m)| |e_n(t)| |e_m(t)|.
\end{eqnarray*}
Now since the matrix $A = (a_{nm})$ defines a bounded operator in $\ell^2(\mathbb{Z})$, then
 $$\sum_{N < |n|\ne |m| \leq M} a_{nm} |n m|^{b} |\hat f(n)| |\overline{\hat f(m)} |e_n(t)| |e_m(t)|  \leq K \sum_{N < n \leq M} |\hat f(n)|^2 |n|^{2b} |e_n(t))|^2$$
 for some constant $K>0$. (In fact we may write this sum as $w A \overline{w}$ where $w = (w_n)_{n\in \mathbb{Z}}$ and $w_n = |\hat f(n)| |n|^b |e_n(t)|$ for $N < n \leq M$ and $w_n = 0$ outside the interval $(N, M]$ in $\mathbb{Z}$ and since $A$ is  positive semidefinite and bounded, then $w A \overline{w} \leq K \|w\|^2$.)
It follows that   
  $$\mathbb{E}(|S_M(t) -S_{N}(t)|^2) \leq K' \sum_{N < n \leq M} |\hat f(n)|^2 |n|^{2b} |e_n(t))|^2$$ for some constant $K'>0$.
Now clearly inequality (\ref{dsdsdefre}) implies in particular that for each $t \in \mathbb{T}$ the series $\sum_{ n\in \mathbb{Z}} |\hat f(n)|^2 |n|^{2b} |e_n(t)|^2$ converges from which it follows that 
$\lim_{N, M \to \infty} \mathbb{E}(|S_M(t) -S_{N}(t)|^2) = 0$. This yields the $L^2$--convergence of the series $S(f, \xi, t)$. The same argument yields that for all $t,s \in \mathbb{T}$, 
        $$\mathbb{E}|S(f, \xi, t)- S(f, \xi, s)|^2  \leq K  \sum_{n\in \mathbb{Z}} |\hat f(n)|^2 |n|^{2b} |e_n(t)- e_n(s)|^2$$  for some constant $K>0$.
That is 
     $$\mathbb{E}|S(f, \xi, t)- S(f, \xi, s)|^2  \leq K \mathbb{E}\left|\sum_{n\in \mathbb{Z}} \hat f(n) |n|^{b} g_n (e_n(t)- e_n(s))\right|^2$$
     where $(g_n)$ is the standard complex Gaussian sequence. 
Inequality (\ref{dsdsdefre}) implies that the random series 
 $t \mapsto \sum_{n\in \mathbb{Z}} \hat f(n) |n|^{b} g_n e_n(t)$ converges uniformly on $\mathbb{T}$. This yields by the comparison principle that the paths $t \to S(f, \xi, t)$ are bounded almost surely. If in addition $f$ is continuous then $f \in \mathscr{P}(\xi)$. \quad \hfill \hfill \qed
\end{proof}              
An example of a covariance function $\gamma$ for which the matrix $(\gamma(n-m) |n m|^{-b})$ defined a bounded operator in $\ell^2(\mathbb{Z})$ is obtained by assuming
        $|\gamma(n-m)| \leq K |n-m|^{-a}$ 
for some constants $K>0$ and $0 < a < 1.$ 
This can be obtained by using the classical Schur test together with the following well-known fact: for real numbers $0 <a < 1$ and $ d >  0$ such that $a + d >1$, there exists a constant $K> 0$ (depending on $a$ and $d$) such that 
   $$\mbox{ for all } n \in \mathbb{Z}, \sum_{m \ne n}|n-m|^{-a} |m|^{-d} \leq K |n|^{1-(a+d)}.$$
Set $a_{nm} = |n-m|^{-a} |n m |^{-b}$ (with $a_{nn} = a_{n,0} = a_{0,n} = 0$)
and consider a real number $c>0$ such that $a + b + c > 1$ and $x = (x_n)_{n\in \mathbb{Z}}$ with  $x_n = |n|^{-c}$ for $ \ne 0$ and $x_0 = 1$. It is the case that 
    $$\forall  n \,\,\, \sum_{m \in \mathbb{Z}} a_{nm} x_m  \leq K x_n \,\,\, \mbox{ and }\,\,\,   \forall m \,\,\, \sum_{n \in \mathbb{Z}} x_n a_{nm} \leq K  x_m.$$ 
Hence Schur's test yields that the operator defined by $(a_{nm})$ is bounded.  In particular the covariance function $\gamma_H$ of the fractional Gaussian noise of index $H$ is such that the corresponding matrix $(a_{nm})$ given by $a_{nm} = \gamma_H(n-m) |nm|^{-b}$ for any $b>0$
 defines a bounded operator. Indeed it is easy to verify that  $0 < \gamma_H(n - m) \leq |n - m|^{-2(1-H)}$ for $m  \ne n.$      
\begin{corollary}
Let $f\in L^2(\mathbb{T})$ and $\frac{1}{2} \leq H < 1$. If there exist $\alpha > H$ and $K>0$ such that
$$|\hat f(n)| \leq K |n|^{-\alpha},\,\, \mbox{ for all } n \ne 0,$$
then the function $t \mapsto \sum_{n \in \mathbb{Z}} \hat f(n) \Delta_n e^{2 \pi  i n t}$ is bounded almost surely.  In particular if in addition $f\in C(\mathbb{T})$ but the series $\sum_{n\in \mathbb{Z}} |\hat f(n)|$ diverges, then $f\in \mathscr{P}(H)$ but $f\notin A(\mathbb{T}).$
\end{corollary}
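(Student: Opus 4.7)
The proof is a direct application of Theorem~\ref{MuPi} to the stationary sequence $\xi = \Delta$. The discussion following Theorem~\ref{MuPi} already verifies, via Schur's test and the bound $|\gamma_H(n-m)| \leq |n-m|^{-2(1-H)}$, that the weighted matrix $(|\gamma_H(n-m)|\,|nm|^{-b})$ defines a bounded operator on $\ell^2(\mathbb{Z})$ for appropriate $b$ (concretely, any $b \geq \max(0, H-\tfrac{1}{2})$ suffices for the Schur computation with weight $x_n = |n|^{-c}$). So the structural hypothesis of Theorem~\ref{MuPi} is free once $b$ is chosen in this range.

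The only substantive step is to verify that the Fourier decay $|\hat f(n)| \leq K|n|^{-\alpha}$ implies condition~(\ref{dsdsdefre}) for some admissible $b$. The plan is to fix
$$\max\!\Bigl(0,\,H-\tfrac{1}{2}\Bigr)\;\leq\; b\;<\;\alpha-\tfrac{1}{2},$$
which is possible precisely because $\alpha > H \geq \tfrac{1}{2}$. For such $b$ we have $2(\alpha-b) > 1$, so a standard tail geometric-series comparison yields, for $|n| \geq 2$,
$$\sum_{|k|\geq|n|}|\hat f(k)|^{2}\,|k|^{2b}\;\leq\; K^{2}\sum_{|k|\geq|n|}|k|^{-2(\alpha-b)}\;\leq\; C\,|n|^{\,1-2(\alpha-b)}.$$
Taking square roots and dividing by $|n|(\log|n|)^{1/2}$ produces a summand of order $|n|^{\,b-\alpha-1/2}(\log|n|)^{-1/2}$; since $b-\alpha-\tfrac{1}{2}<-1$, the series in (\ref{dsdsdefre}) converges.

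Theorem~\ref{MuPi} then delivers that the path $t \mapsto \sum_{n \in \mathbb{Z}} \hat f(n)\,\Delta_n e^{2\pi i n t}$ is almost surely bounded. When $f \in C(\mathbb{T})$, the identification (recalled in Section~1) of this random series with the Fourier series of the Gaussian process $X_f$ associated with the spectral measure of $\Delta$ gives that $X_f$ has bounded paths almost surely, i.e.\ $f \in \mathscr{P}(H)$. The side assumption $\sum|\hat f(n)|=\infty$ is literally $f\notin A(\mathbb{T})$, so nothing remains to be proved for the strict inclusion.

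There is no genuine obstacle in the argument; all the estimates are elementary once Theorem~\ref{MuPi} is in hand. The only point worth highlighting is the bookkeeping on the exponent $b$: the Schur test forces $b$ from below (by $H-\tfrac{1}{2}$), while summability in (\ref{dsdsdefre}) forces it from above (by $\alpha-\tfrac{1}{2}$). The hypothesis $\alpha > H$ is exactly what makes these two constraints simultaneously satisfiable, which also explains why the conclusion becomes vacuous as $\alpha \downarrow H$.
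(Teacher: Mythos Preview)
Your argument is correct and follows exactly the paper's approach: choose $b = H - \tfrac12$ (the lower endpoint of your admissible range) and verify that the Fourier decay $|\hat f(n)|\le K|n|^{-\alpha}$ forces the tail in~(\ref{dsdsdefre}) to be of order $|n|^{\,b-\alpha-1/2}(\log|n|)^{-1/2}$ with exponent $<-1$. The paper merely asserts ``by taking $b=H-1/2$ it is clear that inequality~(\ref{dsdsdefre}) holds,'' so your write-up simply supplies the elementary tail estimate and the bookkeeping on $b$ that the paper leaves implicit.
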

This follows from the fact that by taking $b = H - 1/2 > 0$ it is clear that inequality (\ref{dsdsdefre}) holds.

\paragraph{Remark 2:} It would be interesting to investigate whether inequality (\ref{dsdsdefre}) implies the  uniform convergence (almost surely) of the random Fourier series $t \to \sum_{n\in \mathbb{Z}} \hat f(n) \xi_n e^{2\pi n i t}$ as it is the case for i.i.d random variables. Pisier \cite{Pisier_2} obtained that the Banach space of $f \in L^2(\mathbb{T})$ such that  $\sum_{n \in \mathbb{Z}} Z_n \hat f(n) e^{2\pi i n t}$ (with $(Z_n)$ i.i.d Gaussian variables) is bounded a.s. is of cotype 2. It would be interesting to consider this question for the general stationary Gaussian process $(\xi_n)$. Pedersen \cite{Pedersen} proved that spectral synthesis holds in $\mathscr{P}$. This question is also of some interest in the general setting discussed in this paper. 
\paragraph{Acknowledgments:} I would like to thank the anonymous referee for very helpful comments, suggestions and new results that have significantly improved this paper. Theorem 2 was directly suggested by the referee.
This research received with gratitude funding from the College of Economics and Management Sciences of the University of South Africa and the EU project CID. Most of this work was conducted during a visit to the National Institute for Research in Digital Science and Technology (INRIA) in Nancy. I thank Mathieu Hoyrup for the invitation and his hospitality.

\end{document}